\newtheorem{lem}{Lemma}[section]
\newtheorem{cor}[lem]{Corollary}
\newtheorem{thm}[lem]{Theorem}
\numberwithin{equation}{section}
\newcommand\tr{\mathrm{tr}}
\newcommand\bip{\mathrm{bip}}
\newcommand\rank{\operatorname{rank}}
\begin{document}

\title{New Bounds for the Laplacian Spectral Radius of a Signed Graph}

\author{Nathan Reff}
\address{Department of Mathematical Sciences\\Binghamton University (SUNY)\\ Binghamton, NY 13902-6000, U.S.A.}
\email{reff@math.binghamton.edu}

\subjclass[2000]{Primary 05C50; Secondary 05C22}

\keywords{Signed graph, Laplacian spectral radius, Laplacian graph eigenvalues, Kirchhoff matrix, signless Laplacian}

\begin{abstract}
We obtain new bounds for the Laplacian spectral radius of a signed graph.  Most of these new bounds have a dependence on edge sign, unlike previously known bounds, which only depend on the underlying structure of the graph.  We then use some of these bounds to obtain new bounds for the Laplacian and signless Laplacian spectral radius of an unsigned graph by signing the edges all positive and all negative, respectively.
\end{abstract}

\date{\today}
\maketitle

\section{Introduction}

The study of eigenvalues of matrices associated to graphs has a long and rich history.  Researchers have extensively studied the adjacency, Laplacian, normalized Laplacian and recently, signless Laplacian matrices of a graph.

A \emph{signed graph} is a graph with the additional structure that edges are given a sign of either $+1$ or $-1$.  An unsigned graph can be considered a signed graph with all edges signed $+1$.  
Formally, a signed graph is a pair $\Sigma=(\Gamma,\sigma)$ consisting of an \emph{underlying graph} $\Gamma=(V,E)$ and a \emph{signature} $\sigma:E\rightarrow \{+1,-1 \}$.  We define the adjacency matrix $A(\Sigma)=(a_{ij})_{n\times n}$ as
\begin{equation*}
a_{ij}=
\begin{cases} \sigma(e_{ij}) & \text{if }v_i\text{ is adjacent to }v_j,
\\
0 &\text{otherwise.}
\end{cases}
\end{equation*}
The \emph{Laplacian matrix} $L(\Sigma)$ (also called the \emph{Kirchhoff matrix} or \emph{admittance matrix}) of a signed graph is defined as $D(\Sigma)-A(\Sigma)$ where $D(\Sigma)$ is the diagonal matrix of the degrees of vertices of $\Sigma$.  The eigenvalues of $L(\Sigma)$ are real and nonnegative because $L(\Sigma)$ is real and positive semidefinite.

Recently there have been studies of eigenvalue bounds for the Laplacian matrix of a signed graph \cite{MR1950410, MR2156977, MR2477235}.

In this paper we find new bounds for the largest eigenvalue of the Laplacian matrix of a signed graph.  While previous work has generalized many known bounds of unsigned graphs to signed graphs, there are no bounds which depend on edge signs.  One objective of this paper is to begin filling this void.  

A consequence of studying eigenvalue bounds for a signed graph is that we can get information on an unsigned graph's associated matrices.  We obtain bounds for the \emph{Laplacian} eigenvalues by signing all edges of $\Gamma$ positive ($+1$), and likewise we obtain bounds for the \emph{signless Laplacian} (\emph{quasi-Laplacian} or \emph{co-Laplacian}) eigenvalues by signing all edges of $\Gamma$ negative ($-1$).  We get such bounds because the Laplacian matrix of $\Gamma$ is
\begin{equation*}
L(\Gamma)=L(\Gamma,+1)=D(\Gamma)-A(\Gamma)
\end{equation*}
and the signless Laplacian is
\begin{equation*}
Q(\Gamma)=L(\Gamma,-1)=D(\Gamma)+A(\Gamma).
\end{equation*}

The paper is organized in the following manner.  In Section \ref{BackS} we provide definitions and the main lemmas that will be applied in Section \ref{MainR}.  We also mention a method of obtaining upper bounds for the Laplacian spectral radius of a signed graph using the signless Laplacian spectral radius of an unsigned graph.  We include two new bounds for the Laplacian spectral radius of a signed graph as examples.  In Section \ref{MainR} we obtain six lower bounds and one upper bound for the Laplacian spectral radius of a signed graph.  All of these new bounds depend in some way on the edge signs.  In Section \ref{LandSL} we use the bounds from Section \ref{MainR} to get new bounds for the spectral radius of the Laplacian and signless Laplacian of an unsigned graph.  In Section \ref{eg} we apply our bounds to some particular signed graphs and compare these to previously known bounds.

\section{Background}\label{BackS}

All graphs considered here are simple.  We define $V:=\{v_1,v_2,\ldots,v_n\}$ and edges in $E$ are denoted by $e_{ij}=v_i v_j$.  We define $n:=|V|$ and $m:=|E|$.  The largest Laplacian eigenvalue $\lambda_{\max}(L(\Sigma))$ is called the \emph{Laplacian spectral radius} of the signed graph $\Sigma$.  

The sign of a cycle in a signed graph is the product of the signs of its edges.  A signed graph is said to be \emph{balanced} if all of its cycles are positive.  We write $b(\Sigma)$ for the number of connected components of $\Sigma$ that are balanced.  We will write $d_j=\mathrm{deg}(v_j)$ for the degree of vertex $j$.   We define the maximum degree as $\Delta:=\max_{j} d_j$.  We use the following notation for sums of powers of the degrees in a graph: $s_p=\sum_{j=1}^n d_j^p$.  The set of vertices adjacent to a given vertex $v$ will be denoted $N(v)$.  The \emph{average degree} is defined as $\overline{d}:=\sum_{j=1}^n d_j/n$.  The \emph{average 2-degree} is defined as $m_j:=\sum_{v_i\in N(v_j)} d_i/d_j$.  We will write $d_j^+$ and $d_j^-$ for the number of positive and negative edges incident to vertex $v_j$, respectively.  This means that $d_j=d_j^+ + d_j^-$.  We define the \emph{net degree} as $d_j^{\pm}=d_j^+-d_j^-$.  We define several different types of degree vectors that will be used as follows: $\mathbf{d}:=(d_1,\ldots,d_n)\in \mathbb{R}^n$, $\mathbf{d}^{(k)}:=(d_1^k,\ldots,d_n^k)\in \mathbb{R}^n$, $\mathbf{d}^+:=(d_1^+,\ldots,d_n^+)\in \mathbb{R}^n$, $\mathbf{d}^-:=(d_1^-,\ldots,d_n^-)\in \mathbb{R}^n$ and $\mathbf{d}^{\pm}:=(d_1^{\pm},\ldots,d_n^{\pm})\in \mathbb{R}^n$.

We denote the all 1's vector by ${\bf j}=(1,\ldots,1)\in \mathbb{R}^n$.  We will use the notation $A^*$ for the conjugate transpose of the matrix $A$.   We will use the Frobenius matrix norm $\|A\|=\sqrt{\tr(A^*A)}$. Let $A$ be a complex $n\times n$ matrix with eigenvalues $\lambda_j$ where $j\in\{1,2,\ldots,n\}$.  The \emph{spectral radius} of $A$ is defined as $\rho(A)=\max_{1\leq j\leq n}|\lambda_j|$.

Huang and Wang \cite{MR2350694} produced several new bounds for the spectral radius of complex matrices involving traces.  The following lemmas are bounds which can be found in their paper.  We will apply them to the Laplacian matrix of a signed graph in Section \ref{MainR}.

\begin{lem}[\cite{MR2350694}]\label{lem2}
Let $A$ be a complex $n\times n$ matrix with $\rank(A)\leq r\leq n$.
Then
\[ \rho(A)\leq \frac{|\tr(A)|}{r} +\left(\frac{r-1}{2r}\right)^{1/2}\left(c(A)- \frac{|\tr(A)|^2}{r} + \left|\tr(A^2)-\frac{\tr(A)^2}{r} \right|\right)^{1/2}, \]
where
\[c(A)=\left[ \left( \|A\|^2- \frac{|\tr(A)|^2}{n}\right)^2 - \frac{1}{2}\|A^* A- AA^*\|^2 \right]^{1/2} +  \frac{|\tr(A)|^2}{n}. \]
\end{lem}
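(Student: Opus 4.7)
The plan is to derive the bound from three ingredients: a Schur-type bound on $\sum_j |\lambda_j(A)|^2$ that accounts for the normality defect of $A$ (this is where $c(A)$ enters), an elementary inequality for complex numbers whose sum is zero, and a triangle-inequality split $\rho(A) \leq |\tr(A)/r| + |\lambda_1 - \tr(A)/r|$. The role of the hypothesis $\rank(A) \leq r$ is that $A$ has at most $r$ nonzero eigenvalues, so we will work with $\lambda_1, \dots, \lambda_r$ (padded by zeros if necessary) with $|\lambda_1| = \rho(A)$.

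Step 1 (identifying $c(A)$). Let $B = A - (\tr(A)/n)\,I$, so $\tr(B) = 0$, $\|B\|^2 = \|A\|^2 - |\tr(A)|^2/n$, and $B^*B - BB^* = A^*A - AA^*$. I would invoke (or prove via a standard Schur decomposition argument) the strengthened Schur inequality of Kress--de Vries--Wegmann, namely
\[
\sum_{j=1}^n |\lambda_j(B)|^2 \;\leq\; \left(\|B\|^4 - \tfrac{1}{2}\|B^*B - BB^*\|^2\right)^{1/2}.
\]
Since $\lambda_j(A) = \lambda_j(B) + \tr(A)/n$ and $\sum_j \lambda_j(B) = 0$, expanding $|\lambda_j(A)|^2$ and summing gives $\sum_j |\lambda_j(A)|^2 \leq c(A)$.

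Step 2 (centering at $\tr(A)/r$). Put $\mu = \tr(A)/r$ and $z_j = \lambda_j - \mu$ for $j = 1, \dots, r$. Then $\sum_j z_j = \tr(A) - r\mu = 0$, $\sum_j z_j^2 = \tr(A^2) - \tr(A)^2/r$, and
\[
\sum_j |z_j|^2 \;=\; \sum_j |\lambda_j|^2 - r|\mu|^2 \;\leq\; c(A) - \frac{|\tr(A)|^2}{r}.
\]
Step 3 (a complex-number lemma). I claim that if $z_1, \dots, z_r \in \mathbb{C}$ satisfy $\sum_j z_j = 0$, then
\[
|z_1|^2 \;\leq\; \frac{r-1}{2r}\left(\sum_j |z_j|^2 + \Bigl|\sum_j z_j^2\Bigr|\right).
\]
To prove this, rotate by multiplying every $z_j$ by $e^{-i\arg z_1}$ (which preserves $\sum_j |z_j|^2$, $|\sum_j z_j^2|$, and $|z_1|$), so we may assume $z_1 \geq 0$. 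Then $z_1 = -\sum_{j\geq 2}\mathrm{Re}(z_j)$, and Cauchy--Schwarz yields $z_1^2 \leq (r-1)\sum_{j\geq 2}(\mathrm{Re}\,z_j)^2$, hence $z_1^2 \leq \tfrac{r-1}{r}\sum_{j=1}^r (\mathrm{Re}\,z_j)^2$. The identity $\sum_j(\mathrm{Re}\,z_j)^2 = \tfrac{1}{2}\bigl(\sum_j |z_j|^2 + \mathrm{Re}(\sum_j z_j^2)\bigr)$ together with $\mathrm{Re}(\sum_j z_j^2) \leq |\sum_j z_j^2|$ completes the claim. Substituting the estimates from Step 2 bounds $|\lambda_1 - \mu|^2$, and the triangle inequality $\rho(A) \leq |\mu| + |\lambda_1 - \mu|$ then gives the stated form.

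The main obstacle is Step 1: the Kress--de Vries--Wegmann inequality is a nontrivial sharpening of Schur's bound $\sum |\lambda_j|^2 \leq \|A\|^2$ that quantifies exactly how far $A$ is from being normal, and without it the final estimate would be strictly weaker. Once that is in hand, Steps 2 and 3 are essentially bookkeeping plus a Cauchy--Schwarz after a well-chosen rotation.
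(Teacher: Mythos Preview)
The paper does not prove this lemma; it is quoted verbatim from Huang and Wang~\cite{MR2350694} and used as a black box in the proof of Theorem~\ref{UBSGLap}. So there is no ``paper's proof'' to compare against.

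That said, your argument is correct and is essentially the standard route to such trace bounds. Step~1 is exactly the Kress--de~Vries--Wegmann refinement of Schur's inequality (applied to the traceless shift $B=A-(\tr A)/n\,I$), and your verification that this yields $\sum_j|\lambda_j(A)|^2\le c(A)$ is accurate. Step~2 uses only that $\rank(A)\le r$ forces at most $r$ nonzero eigenvalues, so all trace identities localize to those $r$ indices; the algebra checks. Step~3 is a clean self-contained lemma: the rotation to make $z_1\ge 0$, the Cauchy--Schwarz step $z_1^2\le(r-1)\sum_{j\ge 2}(\mathrm{Re}\,z_j)^2$, the passage to $\frac{r-1}{r}\sum_{j\ge 1}(\mathrm{Re}\,z_j)^2$ by adding $(r-1)z_1^2$ to both sides, and the identity $(\mathrm{Re}\,z)^2=\tfrac12(|z|^2+\mathrm{Re}(z^2))$ are all correct. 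The final triangle inequality $\rho(A)\le|\tr(A)|/r+|z_1|$ then gives the stated bound.

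The only point worth flagging is that Step~1 is quoted rather than proved: the inequality $\bigl(\sum_j|\lambda_j(B)|^2\bigr)^2\le \|B\|^4-\tfrac12\|B^*B-BB^*\|^2$ is itself a nontrivial result requiring a Schur-triangularization argument, so your proof is complete modulo that citation---which is entirely appropriate here, since the paper under review treats the whole lemma as a citation anyway.
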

\begin{lem}[\cite{MR2350694}]
Let $A$ be a complex matrix of order $n$ with $\rank(A)\leq r\leq n$.
Then
\begin{align}
\rho(A)&\geq \sqrt{\frac{|\tr(A)^2-\tr(A^2)|}{r(r-1)}}, \quad r\geq 2; \label{l3e1}\\
\rho(A)&\geq \left(\frac{|2\tr(A^3)-3\tr(A^2)\tr(A)+\tr(A)^3|}{r(r-1)(r-2)} \right)^{1/3}, \quad r\geq 3;\label{l3e2}\\
\rho(A)&\geq \left(\frac{|\tr(A)\tr(A^2)-\tr(A^3)|}{r(r-1)} \right)^{1/3}, \quad r\geq 2.\label{l3e3}
\end{align}
\end{lem}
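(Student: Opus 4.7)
The plan is to write each bound in terms of the elementary symmetric polynomials in the eigenvalues of $A$ and then apply the pointwise estimate $|\lambda_i|\leq \rho(A)$ term-by-term. Because $\rank(A)\leq r$, at most $r$ eigenvalues of $A$ are nonzero, so I may work with a multiset $\lambda_1,\ldots,\lambda_r$ of ``essential'' eigenvalues (padded with zeros if necessary); the remaining $n-r$ eigenvalues contribute nothing to any power sum, so $\tr(A^k)=\sum_{i=1}^{r}\lambda_i^k$ for all $k\geq 1$.

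For \eqref{l3e1}, I would invoke Newton's identity
\[
e_2 := \sum_{1\leq i<j\leq r}\lambda_i\lambda_j = \tfrac{1}{2}\bigl(\tr(A)^2-\tr(A^2)\bigr).
\]
Since $e_2$ is a sum of $\binom{r}{2}$ products of the $\lambda_i$, the triangle inequality gives $|e_2|\leq \binom{r}{2}\rho(A)^2$, and rearranging yields the first bound. For \eqref{l3e2}, Newton's identity delivers $6e_3 = 2\tr(A^3)-3\tr(A)\tr(A^2)+\tr(A)^3$; since $e_3$ is a sum of $\binom{r}{3}$ products, each bounded in modulus by $\rho(A)^3$, the bound $|e_3|\leq \binom{r}{3}\rho(A)^3$ rearranges to the second inequality.

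For \eqref{l3e3} I would use the direct identity
\[
\sum_{i\neq j}\lambda_i^2\lambda_j = \sum_{i=1}^{r}\lambda_i^2\bigl(\tr(A)-\lambda_i\bigr) = \tr(A)\tr(A^2)-\tr(A^3),
\]
whose left-hand side consists of $r(r-1)$ summands, each of absolute value at most $\rho(A)^3$. The triangle inequality then gives $|\tr(A)\tr(A^2)-\tr(A^3)|\leq r(r-1)\rho(A)^3$, which is the third bound after rearrangement.

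The arguments are mechanical applications of Newton's identities paired with the crude estimate $|\lambda_i|\leq \rho(A)$; the only real bookkeeping issue is the rank hypothesis, which is needed precisely so that the counts $\binom{r}{2}$, $\binom{r}{3}$, and $r(r-1)$ are sharp, rather than the cruder $\binom{n}{2}$, $\binom{n}{3}$, and $n(n-1)$ one would get without it. I anticipate no further obstacle.
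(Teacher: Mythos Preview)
Your argument is correct. The paper does not actually prove this lemma; it is quoted verbatim from Huang and Wang~\cite{MR2350694} and used as a black box, so there is no ``paper's own proof'' to compare against. Your derivation via Newton's identities and the termwise estimate $|\lambda_i|\leq\rho(A)$ is exactly the standard route, and your handling of the rank hypothesis is sound: since the number of nonzero eigenvalues of $A$ (with algebraic multiplicity) is at most $\rank(A)\leq r$, padding the list of essential eigenvalues with zeros up to length $r$ changes neither the power sums $\tr(A^k)$ nor the symmetric-function identities, and the triangle-inequality counts $\binom{r}{2}$, $\binom{r}{3}$, $r(r-1)$ remain valid because the extra terms vanish.
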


Several of the bounds in this paper involve the rank of the Laplacian matrix.  We will use the following lemma to further simplify our bounds.
\begin{lem}[\cite{MR676405}]\label{lemRANK} Let $\Sigma$ be a signed graph. Then $\rank(L(\Sigma))=n-b(\Sigma)$.
\end{lem}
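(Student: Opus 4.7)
My plan is to realize $L(\Sigma)$ as a Gram matrix $L(\Sigma) = \Eta(\Sigma)\Eta(\Sigma)^{T}$, where $\Eta(\Sigma)$ is an appropriate signed incidence matrix, and then reduce the rank count to a combinatorial analysis of the kernel of $\Eta(\Sigma)^{T}$. Concretely, for each edge $e = v_i v_j$ I pick an orientation of each end: for a positive edge I give the two ends opposite signs ($+1$ and $-1$), and for a negative edge I give them the same sign (both $+1$, say). This is the standard bidirected-edge incidence matrix for signed graphs. A direct entrywise computation then shows $\Eta(\Sigma)\Eta(\Sigma)^{T} = D(\Sigma) - A(\Sigma) = L(\Sigma)$: the diagonal entries come out as $d_j$ because each incident edge contributes $(\pm 1)^{2} = 1$, and the $(i,j)$-entry for an edge $e_{ij}$ is the product of its two column entries in $\Eta(\Sigma)$, which equals $-\sigma(e_{ij})$.

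Once this factorization is in place, the linear algebra is easy: since $L(\Sigma)$ is positive semidefinite, $L(\Sigma)y = 0$ iff $y^{T}\Eta\Eta^{T}y = \|\Eta^{T}y\|^{2} = 0$ iff $\Eta^{T}y = 0$. Therefore $\rank L(\Sigma) = \rank \Eta(\Sigma) = n - \dim\ker \Eta(\Sigma)^{T}$, so it suffices to show that $\dim \ker \Eta(\Sigma)^{T} = b(\Sigma)$.

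The defining equation $\Eta(\Sigma)^{T} y = 0$ is one constraint per edge: reading off the two nonzero entries in each column, it says $y_{i} = \sigma(e_{ij})\, y_{j}$ for every edge $e_{ij}$ (positive edges force $y_{i} = y_{j}$, negative edges force $y_{i} = -y_{j}$). Because the constraints couple only endpoints of the same edge, $\ker \Eta^{T}$ decomposes as a direct sum over the connected components of $\Sigma$, and it is enough to analyze one component $C$ at a time. Fix $v_{0}\in C$ and set $y_{v_{0}}$; by connectedness, the constraints propagate $y_{v}$ along any path from $v_{0}$ to $v$ as $y_{v_{0}}$ times the product of edge signs along the path. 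Consistency around any closed walk is therefore equivalent to demanding that every cycle through $v_{0}$ have sign $+1$, i.e.\ that $C$ be balanced. Thus a balanced component contributes a one-dimensional solution space (parameterized by $y_{v_{0}}$), and an unbalanced component forces $y \equiv 0$ on $C$ (any negative cycle yields $y_{v_{0}} = -y_{v_{0}}$).

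Summing contributions over components gives $\dim \ker \Eta(\Sigma)^{T} = b(\Sigma)$, whence $\rank L(\Sigma) = n - b(\Sigma)$. The one place to be careful is the verification that the columnwise convention above really produces $-\sigma(e_{ij})$ in the $(i,j)$-entry of $\Eta\Eta^{T}$ for both positive and negative edges (the minus sign comes from the opposite orientations in the positive case, and from the $(+1)(+1)$ together with the overall sign convention in the negative case); after that, the argument is just the observation that balance is exactly the cocycle/consistency condition for the $\pm 1$-valued propagation rule.
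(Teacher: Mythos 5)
Your argument is correct: the factorization $L(\Sigma)=\mathrm{H}(\Sigma)\mathrm{H}(\Sigma)^{T}$ with the bidirected incidence matrix, followed by the observation that $\ker \mathrm{H}(\Sigma)^{T}$ picks up one dimension per balanced component (sign-propagation from a base vertex is consistent exactly when the component is balanced) and nothing from unbalanced ones, gives $\rank L(\Sigma)=n-b(\Sigma)$. The paper itself offers no proof, citing Zaslavsky's signed-graphs paper, and your proof is essentially the standard incidence-matrix argument behind that citation, so there is nothing to fix beyond the small wording point that consistency forces positivity of \emph{all} cycles in the component, not just those through the base vertex (any cycle extends to a closed walk at the base vertex of the same sign).
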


Suppose $X\subseteq V$, and $\theta:V\rightarrow \{+1,-1\}$.  \emph{Switching $\Sigma$ by $\theta$} means changing $\sigma$ to $\sigma^{\theta}$ defined by
\[ \sigma^{\theta}(e_{ij})=\theta(v_i)\sigma(e_{ij})\theta(v_j). \]
The switched signed graph is denoted by $\Sigma^{\theta}:=(\Gamma,\sigma^{\theta})$.  We say two signed graphs $\Sigma_1$ and $\Sigma_2$ are \emph{switching equivalent}, written $\Sigma_1\sim \Sigma_2$, if $\Gamma_1=\Gamma_2$ and there exists a function $\theta:V(\Sigma_1)\rightarrow \{+1,-1\}$ such that $\Sigma_1^{\theta}=\Sigma_2$.

Hou, Li and Pan \cite{MR1950410} established the following upper bound on the Laplacian spectral radius of a signed graph in terms of the all-negative graph $(\Gamma,-1)$.

\begin{lem}[\cite{MR1950410}]\label{SGUBNeg} Let $\Sigma=(\Gamma,\sigma)$ be a connected signed graph.  Then
\[ \lambda_{\max}(L(\Sigma)) \leq \lambda_{\max}(L(\Gamma,-1)),\]
with equality if and only if $\Sigma \sim (\Gamma,-1)$.
\end{lem}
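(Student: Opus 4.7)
The plan is to prove the inequality through the Rayleigh quotient characterization of the largest eigenvalue of a symmetric matrix, together with an edge-by-edge comparison of quadratic forms. Recall that
\[
\lambda_{\max}(L(\Sigma)) = \max_{x \neq 0} \frac{x^T L(\Sigma)\, x}{x^T x},
\]
and a direct computation gives the edge decomposition
\[
x^T L(\Sigma)\, x = \sum_{\sigma(e_{ij})=+1}(x_i - x_j)^2 + \sum_{\sigma(e_{ij})=-1}(x_i + x_j)^2,
\]
while $x^T L(\Gamma,-1)\, x = \sum_{e_{ij}\in E}(x_i + x_j)^2$. First I would pick $x$ to be a unit eigenvector of $L(\Sigma)$ with eigenvalue $\lambda_{\max}(L(\Sigma))$, and consider the vector $|x|$ with entries $|x_i|$.

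The key inequalities are $(x_i - x_j)^2 \leq (|x_i| + |x_j|)^2$ and $(x_i + x_j)^2 \leq (|x_i| + |x_j|)^2$, applied termwise in the edge decomposition above. Summing yields
\[
\lambda_{\max}(L(\Sigma)) = x^T L(\Sigma)\, x \;\leq\; |x|^T L(\Gamma,-1)\, |x| \;\leq\; \lambda_{\max}(L(\Gamma,-1)),
\]
where the last step uses the Rayleigh characterization for $L(\Gamma,-1)$ together with $\||x|\| = \|x\| = 1$. This completes the inequality.

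For the equality characterization, suppose $\lambda_{\max}(L(\Sigma)) = \lambda_{\max}(L(\Gamma,-1))$. Then $|x|$ must itself attain the maximum Rayleigh quotient for $L(\Gamma,-1)$, so $|x|$ is an eigenvector of $L(\Gamma,-1) = D(\Gamma) + A(\Gamma)$ for its largest eigenvalue. Since $\Gamma$ is connected, $L(\Gamma,-1)$ is a nonnegative irreducible matrix, and by the Perron--Frobenius theorem its Perron eigenvector has strictly positive entries; hence $|x_i| > 0$ for all $i$. Moreover, equality must hold termwise above, forcing $x_i$ and $x_j$ to have opposite signs on each positive edge and the same sign on each negative edge. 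Define $\theta(v_i) := \operatorname{sign}(x_i) \in \{+1,-1\}$; then a quick check shows $\sigma^\theta(e_{ij}) = \theta(v_i)\sigma(e_{ij})\theta(v_j) = -1$ for every edge, so $\Sigma^\theta = (\Gamma,-1)$, proving $\Sigma \sim (\Gamma,-1)$. The converse is immediate since switching realizes $L(\Sigma)$ and $L(\Gamma,-1)$ as conjugate by a signed diagonal matrix, preserving all eigenvalues.

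The only delicate point is the equality analysis: it requires knowing that $|x|$ cannot have zero entries, which is why invoking the connectedness of $\Gamma$ (and hence irreducibility of $D(\Gamma) + A(\Gamma)$) via Perron--Frobenius is essential. Everything else reduces to the elementary termwise inequality $(a \pm b)^2 \leq (|a|+|b|)^2$.
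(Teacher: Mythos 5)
Your proof is correct: the edge decomposition of the quadratic form, the termwise bound $(x_i\pm x_j)^2\le(|x_i|+|x_j|)^2$, and the Perron--Frobenius argument guaranteeing $|x_i|>0$ for the equality analysis are all sound, and the switching function $\theta=\operatorname{sign}(x)$ does the job. The paper itself states this lemma as a citation to Hou, Li and Pan without proof, and your argument is essentially the standard one given in that reference.
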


Lemma \ref{SGUBNeg} produces upper bounds for the Laplacian spectral radius of a signed graph from known bounds for the signless Laplacian of an unsigned graph.  For example, the following two lemmas generalize to signed graphs the bounds obtained by Wang in \cite{MR2390486} for unsigned graphs.  Wang stated bounds for the Laplacian spectral radius of an unsigned graph, but the same proofs are also valid for the signless Laplacian spectral radius. 
\begin{lem}\label{WThm1}
Let $\Sigma$ be a connected signed graph.  Then
\begin{equation}\label{UB1}
\lambda_{\max}(L(\Sigma))\leq 2 + \max_{e_{ij}\in E(\Sigma)} \sqrt{\frac{(d_i+d_j-2)(d_i^2 \cdot m_i+d_j^2\cdot m_j-2d_id_j)}{d_id_j}}.
\end{equation}
with equality if and only if $\Sigma \sim(\Gamma,-1)$.
\end{lem}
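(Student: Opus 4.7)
The plan is to compose two existing bounds: Lemma~\ref{SGUBNeg} to pass from a signed graph to the all-negative signing, and a result of Wang~\cite{MR2390486} applied to the signless Laplacian. Since
\[
L(\Gamma,-1)=D(\Gamma)+A(\Gamma)=Q(\Gamma),
\]
Lemma~\ref{SGUBNeg} immediately yields
\[
\lambda_{\max}(L(\Sigma))\le \lambda_{\max}(Q(\Gamma)),
\]
with equality precisely when $\Sigma\sim(\Gamma,-1)$. So the entire proof is reduced to establishing the stated expression as an upper bound for $\lambda_{\max}(Q(\Gamma))$.

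For that second step, I would reproduce Wang's argument from \cite{MR2390486}, substituting $Q$ for $L$ throughout. The author has already flagged that Wang's proof, originally written for $L(\Gamma)$, is sign-insensitive. Concretely, one starts from the eigen-equation $(D\pm A)x=\lambda x$ evaluated at a coordinate of largest magnitude; subtracts $2x$ from both sides so that the quantity $\lambda-2$ appears; then sums $(\lambda-2)x_i$ against the edge-local data, applies Cauchy--Schwarz to $\sum_{v_k\in N(v_j)}d_k$ to introduce the average 2-degree $m_j$, and finally optimizes over an edge $e_{ij}$ to land on the expression $(d_i+d_j-2)(d_i^2 m_i+d_j^2 m_j-2d_id_j)/(d_id_j)$. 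None of these steps ever uses the minus sign in $D-A$ rather than $+A$ in $D+A$; only the non-negativity of the entries of $|A|$ and the row-sum identity $D\mathbf{j}=$ (degree vector) play a role.

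Once both bounds are in hand, concatenation produces the inequality~\eqref{UB1}, and the equality clause follows directly from the ``if and only if'' in Lemma~\ref{SGUBNeg}: any equality in the chain forces equality in the first step, hence $\Sigma\sim(\Gamma,-1)$.

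The main obstacle is not conceptual but clerical: one has to verify, line by line, that Wang's proof really does carry over from $L$ to $Q$ with the sign in the eigen-equation flipped, and that the equality analysis is unaffected. Once this bookkeeping is discharged, no new idea is needed beyond invoking Lemma~\ref{SGUBNeg}.
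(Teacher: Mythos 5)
Your proposal is exactly the paper's route: the paper derives \eqref{UB1} by combining Lemma~\ref{SGUBNeg} with Wang's bound from \cite{MR2390486}, observing (as you do) that Wang's proof for $L(\Gamma)$ goes through verbatim for the signless Laplacian $Q(\Gamma)=L(\Gamma,-1)$. One small caveat, shared with the paper's own statement: your argument fully justifies only the ``only if'' direction of the equality clause, since the ``if'' direction also requires that $\Gamma$ attain equality in Wang's bound for $Q(\Gamma)$, not merely that $\Sigma\sim(\Gamma,-1)$.
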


Let $d_{{ij}}=d_i+d_j-2$, where $e_{ij}\in E(\Sigma)$.  Let $\delta_{\Lambda}=\min\{d_{{ij}}:e_{ij}\in E(\Sigma)\}$ and $\Delta_{\Lambda}=\max\{d_{{ij}}:e_{ij}\in E(\Sigma) \}$. 
\begin{lem}\label{WThm2}
Let $\Sigma$ be a connected signed graph of order $n>2$ and size $m$.  Then
\begin{equation}\label{UB2}
\lambda_{\max}(L(\Sigma))\leq 2 + \sqrt{\sum_{v\in V(\Sigma)}d_v^2-2m-(m-1)\delta_{\Lambda}+(\delta_{\Lambda}-1)\Delta_{\Lambda}}.
\end{equation}
with equality if and only if $\Sigma \sim(\Gamma,-1)$.
\end{lem}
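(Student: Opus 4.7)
The plan is to combine Lemma \ref{SGUBNeg} with the bound that Wang \cite{MR2390486} established for the Laplacian spectral radius of an unsigned graph. First I would invoke Lemma \ref{SGUBNeg} to obtain
\[
\lambda_{\max}(L(\Sigma)) \leq \lambda_{\max}(L(\Gamma,-1)) = \lambda_{\max}(Q(\Gamma)),
\]
with equality only when $\Sigma \sim (\Gamma,-1)$. Since the right-hand side of \eqref{UB2} is a function of the degree sequence of $\Gamma$ and the size $m$ alone (both extracted from the underlying graph), it suffices to establish the same upper bound for $\lambda_{\max}(Q(\Gamma))$.

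Next, I would quote the bound from \cite{MR2390486}, noting (as remarked in the paragraph preceding Lemma \ref{WThm1}) that Wang stated his proof for $\lambda_{\max}(L(\Gamma))$, but that exactly the same argument — based on Rayleigh-quotient estimates using edge-degree sums — yields the identical inequality when $L(\Gamma)$ is replaced by $Q(\Gamma)$. The key point to verify, and the one slightly technical step, is that the quantity $d_i+d_j-2$ appearing in $\delta_\Lambda$ and $\Delta_\Lambda$ arises from the edge incidence structure of $\Gamma$ rather than from any sign data, so flipping the sign in $D \pm A$ does not affect the combinatorial estimates. Once that is in hand, the Wang-type inequality applied to $Q(\Gamma)$ gives
\[
\lambda_{\max}(Q(\Gamma)) \leq 2 + \sqrt{\textstyle\sum_{v\in V(\Gamma)} d_v^2 - 2m - (m-1)\delta_{\Lambda} + (\delta_{\Lambda}-1)\Delta_{\Lambda}},
\]
and chaining this with the preceding inequality yields \eqref{UB2}.

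For the equality characterization, the forward direction is immediate: if \eqref{UB2} is an equality, then the chain forces equality in Lemma \ref{SGUBNeg}, which (by that lemma) is equivalent to $\Sigma \sim (\Gamma,-1)$. The reverse direction has a subtlety: when $\Sigma \sim (\Gamma,-1)$ we do have $\lambda_{\max}(L(\Sigma)) = \lambda_{\max}(Q(\Gamma))$, so equality in \eqref{UB2} then reduces to Wang's equality condition for his signless-Laplacian bound on $\Gamma$. I would handle this exactly as in Lemma \ref{WThm1} — by recording Wang's equality criterion for $Q$ and observing that it is automatic under switching equivalence to $(\Gamma,-1)$, so the ``if and only if'' reads cleanly.

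The main obstacle is genuinely cosmetic: there is no new spectral argument, since everything is assembled from Lemma \ref{SGUBNeg} and a transplant of Wang's proof. The only care needed is to verify that Wang's argument survives the sign change $L \mapsto Q$, which it does because his estimates rely on $(D+A) + (D-A) = 2D$ and on the edge-by-edge decomposition, neither of which sees the sign.
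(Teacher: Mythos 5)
Your proposal matches the paper's own treatment: the paper presents Lemma \ref{WThm2} (like Lemma \ref{WThm1}) as an immediate consequence of Lemma \ref{SGUBNeg} combined with Wang's bound from \cite{MR2390486}, whose proof carries over verbatim to the signless Laplacian, which is exactly the chain you assemble. The only caveat is your claim that Wang's equality criterion is ``automatic'' under $\Sigma \sim (\Gamma,-1)$ --- strictly speaking equality also requires the underlying graph to meet Wang's structural equality condition --- but the paper's statement glosses over this in the same way, so your argument is faithful to the intended proof.
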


Let $\Sigma^+$ and $\Sigma^-$ denote the induced signed subgraphs by all positive edges and all negative edges, respectively.  Hou, Li and Pan \cite{MR1950410} also mention the following relationship, which is established using an interlacing argument.

\begin{lem}[\cite{MR2350694}]\label{HInterlaceLem}
Let $\Sigma$ be a signed graph.  Then
\begin{equation}\label{ILACEB}
\max\{\lambda_{\max}(L(\Sigma^+)),\lambda_{\max}(L(\Sigma^-))\} \leq \lambda_{\max}(L(\Sigma)).
\end{equation}
\end{lem}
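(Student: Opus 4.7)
The plan is to reduce the inequality to a Rayleigh quotient comparison after observing that $L(\Sigma)$ splits additively as $L(\Sigma^+)+L(\Sigma^-)$, with both summands positive semidefinite.

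First, I would verify the decomposition $L(\Sigma)=L(\Sigma^+)+L(\Sigma^-)$. Every edge of $\Sigma$ belongs to exactly one of $\Sigma^+$ or $\Sigma^-$, so for each vertex $v_i$ we have $d_i=d_i^++d_i^-$, giving $D(\Sigma)=D(\Sigma^+)+D(\Sigma^-)$ at the level of diagonal matrices. Looking at off-diagonal entries, $a_{ij}=\sigma(e_{ij})$ vanishes outside positive edges exactly when $e_{ij}$ is negative and vice versa, so $A(\Sigma)=A(\Sigma^+)+A(\Sigma^-)$. Subtracting yields the claimed identity.

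Second, both $L(\Sigma^+)$ and $L(\Sigma^-)$ are positive semidefinite. This is the same fact invoked in the introduction for $L(\Sigma)$, and is most transparent from the signed incidence factorization: if $H(\Sigma)$ denotes the $n\times m$ signed incidence matrix (the column of an edge $e_{ij}$ carries two nonzero entries whose product equals $-\sigma(e_{ij})$), then $L(\Sigma)=H(\Sigma)H(\Sigma)^{T}$, and restricting to the positive- or negative-edge columns gives $L(\Sigma^{\pm})=H(\Sigma^{\pm})H(\Sigma^{\pm})^{T}\succeq 0$.

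Third, I would finish with a one-line eigenvalue comparison. Let $x$ be a unit eigenvector of $L(\Sigma^+)$ for the eigenvalue $\lambda_{\max}(L(\Sigma^+))$. By the Rayleigh--Ritz characterization,
\[
\lambda_{\max}(L(\Sigma))\;\geq\;x^{T}L(\Sigma)x\;=\;x^{T}L(\Sigma^+)x+x^{T}L(\Sigma^-)x\;\geq\;\lambda_{\max}(L(\Sigma^+)),
\]
where the final inequality uses $x^{T}L(\Sigma^-)x\geq 0$ from the positive semidefiniteness of $L(\Sigma^-)$. Interchanging the roles of $+$ and $-$ gives $\lambda_{\max}(L(\Sigma))\geq \lambda_{\max}(L(\Sigma^-))$, and taking the maximum proves the lemma. (Equivalently, the same conclusion is the instance of Weyl's eigenvalue inequality in which one Hermitian summand is positive semidefinite, which is the sense in which this is an ``interlacing'' argument.)

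There is no serious obstacle here: the only step requiring any verification is the additive decomposition of $L(\Sigma)$ by edge sign, after which the inequality is an immediate consequence of comparing largest eigenvalues of a positive semidefinite summand against its sum with another positive semidefinite matrix.
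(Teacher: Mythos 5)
Your proof is correct, but note that the paper does not prove this lemma at all: it quotes it from Hou, Li and Pan and merely remarks that it ``is established using an interlacing argument.'' That argument runs through the incidence factorization $L(\Sigma)=\Eta\Eta^{\mathrm{T}}$: the nonzero spectrum of $L(\Sigma)$ equals that of the edge-indexed matrix $\Eta^{\mathrm{T}}\Eta$, the blocks of $\Eta^{\mathrm{T}}\Eta$ indexed by the positive (resp.\ negative) edge columns are exactly $\Eta(\Sigma^{\pm})^{\mathrm{T}}\Eta(\Sigma^{\pm})$, and Cauchy interlacing for principal submatrices then bounds $\lambda_{\max}(L(\Sigma^{\pm}))$ by $\lambda_{\max}(L(\Sigma))$. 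Your route is different and more elementary: you use the additive decomposition $L(\Sigma)=L(\Sigma^{+})+L(\Sigma^{-})$ into two positive semidefinite signed Laplacians and a single Rayleigh-quotient (Weyl-type) comparison, which needs no interlacing machinery and proves exactly the stated inequality; the interlacing route, by contrast, yields more information (interlacing of the whole spectra, not just the largest eigenvalues). One small point of care: the paper defines $\Sigma^{\pm}$ as edge-induced subgraphs, so strictly their Laplacians may live on fewer vertices; you should say explicitly that you regard them as spanning subgraphs (or pad with zero rows and columns for the isolated vertices), which changes neither the decomposition $D(\Sigma)=D(\Sigma^{+})+D(\Sigma^{-})$, $A(\Sigma)=A(\Sigma^{+})+A(\Sigma^{-})$ nor $\lambda_{\max}$. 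With that convention made explicit, your argument is complete.
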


The next lemma is a part of the Rayleigh-Ritz Theorem.  It is commonly used to establish lower bounds for the largest eigenvalue of a symmetric matrix.  
\begin{lem}\cite[Theorem 4.2.2 p.176]{MR1084815}\label{RRLem} Let $A\in \mathbb{R}^{n\times n}$ be symmetric. Then
\begin{equation}
 \lambda_{\max}(A)=\max_{\mathbf{x}\in \mathbb{R}^n\backslash \{\mathbf{0}\}} \frac{\mathbf{x}^{\text{T}}A\mathbf{x}}{\mathbf{x}^{\text{T}}\mathbf{x}}.
\end{equation}
\end{lem}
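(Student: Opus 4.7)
The approach is the textbook proof of the variational characterization via the spectral theorem. The plan is to diagonalize $A$ in an orthonormal eigenbasis, rewrite the Rayleigh quotient as a convex combination of the eigenvalues, and read off both the upper bound and the extremizer.

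In more detail, first I would invoke the real spectral theorem: since $A$ is symmetric, there exists an orthonormal basis $\{\mathbf{u}_1,\ldots,\mathbf{u}_n\}$ of $\mathbb{R}^n$ and real numbers $\lambda_1\le\lambda_2\le\cdots\le\lambda_n=\lambda_{\max}(A)$ with $A\mathbf{u}_i=\lambda_i\mathbf{u}_i$. Next, for any nonzero $\mathbf{x}\in\mathbb{R}^n$, expand $\mathbf{x}=\sum_{i=1}^n c_i\mathbf{u}_i$ with $c_i=\mathbf{u}_i^{\mathrm T}\mathbf{x}$. Orthonormality gives $\mathbf{x}^{\mathrm T}\mathbf{x}=\sum_i c_i^2$, and bilinearity together with $A\mathbf{u}_i=\lambda_i\mathbf{u}_i$ gives $\mathbf{x}^{\mathrm T}A\mathbf{x}=\sum_i\lambda_i c_i^2$.

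Then I would observe the elementary inequality
\[
\frac{\mathbf{x}^{\mathrm T}A\mathbf{x}}{\mathbf{x}^{\mathrm T}\mathbf{x}}
=\frac{\sum_{i=1}^n\lambda_i c_i^2}{\sum_{i=1}^n c_i^2}
\le \lambda_{\max}(A)\cdot\frac{\sum_{i=1}^n c_i^2}{\sum_{i=1}^n c_i^2}=\lambda_{\max}(A),
\]
since each $\lambda_i\le\lambda_{\max}(A)$ and the weights $c_i^2/\sum_j c_j^2$ are nonnegative and sum to $1$. This shows $\sup_{\mathbf{x}\ne\mathbf{0}}\mathbf{x}^{\mathrm T}A\mathbf{x}/\mathbf{x}^{\mathrm T}\mathbf{x}\le\lambda_{\max}(A)$.

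Finally I would verify that the supremum is attained (so that \emph{max}, not merely \emph{sup}, is correct) by plugging in $\mathbf{x}=\mathbf{u}_n$: then $A\mathbf{u}_n=\lambda_{\max}(A)\mathbf{u}_n$ and the quotient equals $\lambda_{\max}(A)$. There is no genuine obstacle here; the only subtlety worth flagging is that one must know the spectral theorem for real symmetric matrices (guaranteeing a real orthonormal eigenbasis with real eigenvalues), since without it the manipulation of the Rayleigh quotient as a convex combination of real eigenvalues would not be available.
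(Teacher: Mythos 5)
Your proof is correct. The paper does not prove this lemma at all---it simply cites it as Theorem 4.2.2 of Horn and Johnson's \emph{Matrix Analysis}---and your argument is the standard textbook derivation (orthonormal eigenbasis from the real spectral theorem, Rayleigh quotient as a convex combination of eigenvalues, attainment at an eigenvector for $\lambda_{\max}$), so there is nothing to reconcile.
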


\section{Main Results}\label{MainR}

The following three lower bounds for the Laplacian spectral radius of a signed graph depend on the edge signs.
\begin{thm}\label{thm1} Let $\Sigma$ be a connected signed graph of order $n$.  Then
\begin{equation}\label{NeqLB1}
\frac{2}{n}\cdot \sum_{j=1}^n d_j^- \leq \lambda_{\max}(L(\Sigma)),
\end{equation}
\begin{equation}\label{NeqLB2}
\sqrt{\frac{4}{n}\cdot \sum_{j=1}^n (d_j^-)^2} \leq \lambda_{\max}(L(\Sigma)),
\end{equation}
and
\begin{equation}\label{NeqLB3}
\frac{1}{n^{1/3}}\cdot \Big(4\sum_{j=1}^n d_j(d_j^-)^2-8\sum_{e_{ij}\in E(\Sigma)} \sigma(e_{ij})d_i^-d_j^-\Big)^{1/3} \leq \lambda_{\max}(L(\Sigma)).
\end{equation}
\end{thm}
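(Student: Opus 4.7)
The plan is to apply Rayleigh--Ritz (Lemma \ref{RRLem}) to $L(\Sigma)$ and its powers using the all-ones vector $\mathbf{j}$ as the test vector. The engine of the entire theorem is the single identity
\[ L(\Sigma)\,\mathbf{j} = 2\,\mathbf{d}^-, \]
which I would verify first. Indeed, the $i$-th row-sum of $L(\Sigma)$ equals $d_i - d_i^{\pm} = d_i - (d_i^+ - d_i^-) = 2 d_i^-$, since the off-diagonal entries in row $i$ are $-\sigma(e_{ij})$ over the neighbours of $v_i$. This is where the edge signs enter the bounds, and it is really the only substantive computation; the rest is packaging.

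For \eqref{NeqLB1}, I would apply Lemma \ref{RRLem} directly to the symmetric matrix $L(\Sigma)$ with $\mathbf{x}=\mathbf{j}$:
\[ \lambda_{\max}(L(\Sigma)) \;\geq\; \frac{\mathbf{j}^{\mathrm{T}} L(\Sigma)\mathbf{j}}{\mathbf{j}^{\mathrm{T}}\mathbf{j}} \;=\; \frac{\mathbf{j}^{\mathrm{T}}(2\mathbf{d}^-)}{n} \;=\; \frac{2}{n}\sum_{j=1}^{n} d_j^-. \]
For \eqref{NeqLB2} and \eqref{NeqLB3} I would use that $L(\Sigma)$ is symmetric positive semidefinite, so $\lambda_{\max}(L(\Sigma)^k) = \lambda_{\max}(L(\Sigma))^k$ for any positive integer $k$, and then apply Rayleigh--Ritz to $L(\Sigma)^2$ and $L(\Sigma)^3$ with test vector $\mathbf{j}$. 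For the quadratic bound,
\[ \lambda_{\max}(L(\Sigma))^2 \;\geq\; \frac{\mathbf{j}^{\mathrm{T}} L(\Sigma)^2 \mathbf{j}}{n} \;=\; \frac{\|L(\Sigma)\mathbf{j}\|^2}{n} \;=\; \frac{4}{n}\sum_{j=1}^n (d_j^-)^2, \]
which gives \eqref{NeqLB2} after taking square roots.

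For \eqref{NeqLB3}, the same Rayleigh--Ritz idea gives
\[ \lambda_{\max}(L(\Sigma))^3 \;\geq\; \frac{\mathbf{j}^{\mathrm{T}} L(\Sigma)^3 \mathbf{j}}{n} \;=\; \frac{(L(\Sigma)\mathbf{j})^{\mathrm{T}} L(\Sigma) (L(\Sigma)\mathbf{j})}{n} \;=\; \frac{4\,(\mathbf{d}^-)^{\mathrm{T}} L(\Sigma) \mathbf{d}^-}{n}. \]
The remaining calculation is to expand the quadratic form $(\mathbf{d}^-)^{\mathrm{T}} L(\Sigma) \mathbf{d}^-$. Splitting into diagonal and off-diagonal contributions of $L(\Sigma)$ produces
\[ (\mathbf{d}^-)^{\mathrm{T}} L(\Sigma)\mathbf{d}^- \;=\; \sum_{j=1}^n d_j (d_j^-)^2 \;-\; 2\sum_{e_{ij}\in E(\Sigma)} \sigma(e_{ij})\, d_i^- d_j^-, \]
and multiplying by $4/n$ and taking cube roots yields \eqref{NeqLB3}. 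There is no genuine obstacle here; the only subtle point is the sign-dependent row-sum identity $L(\Sigma)\mathbf{j}=2\mathbf{d}^-$, which replaces the familiar fact that $L(\Gamma,+1)\mathbf{j}=\mathbf{0}$ for an ordinary Laplacian and explains why these lower bounds are nontrivial precisely when $\Sigma$ has negative edges.
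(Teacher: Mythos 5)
Your proposal is correct and follows essentially the same route as the paper: apply Rayleigh--Ritz with the all-ones test vector to $L(\Sigma)^k$ for $k=1,2,3$, using positive semidefiniteness to pass from $\lambda_{\max}(L(\Sigma)^k)$ to $\lambda_{\max}(L(\Sigma))^k$. Your organization via $L(\Sigma)\mathbf{j}=2\mathbf{d}^-$ and the quadratic form $(\mathbf{d}^-)^{\mathrm{T}}L(\Sigma)\mathbf{d}^-$ is a slightly cleaner packaging of the paper's termwise expansion of $(D-A)^k$, but it is the same argument.
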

\begin{proof}
For brevity we write $D$ for $D(\Sigma)$ and $A$ for $A(\Sigma)$.  Let $N_k={\bf j}^{\text{T}} L(\Sigma)^k {\bf j}$.  From Lemma \ref{RRLem} the following lower bound is clear: 
\[ \mathbf{j}^{\text{T}}L(\Sigma)^k\mathbf{j}/\mathbf{j}^{\text{T}}\mathbf{j} \leq (\lambda_{\max}(L(\Sigma)))^k.  \]
We will compute $N_1$, $N_2$ and $N_3$; thus, making inequalities \eqref{NeqLB1}, \eqref{NeqLB2} and \eqref{NeqLB3} true.
We will make use of the following equation when computing each $N_k$:
\begin{equation*}
A{\bf j}= \Big(\sum_{j=1}^n \sigma(e_{1j}),\ldots,\sum_{j=1}^n \sigma(e_{nj})\Big)=(d_1^{\pm},\ldots, d_n^{\pm} ) = \mathbf{d}^{\pm}.
\end{equation*}

Now we compute $N_1$.
\begin{align*}
N_1 &= {\bf j}^{\text{T}} L(\Sigma) {\bf j}={\bf j}^{\text{T}} (D-A) {\bf j} = {\bf j}^{\text{T}}(\mathbf{d}-\mathbf{d}^{\pm}) = 2\sum_{j=1}^n d_j^{-}.
\end{align*}

Similarly, we compute $N_2$. 
\begin{align*}
N_2 &= {\bf j}^{\text{T}} L(\Sigma)^2 {\bf j}= {\bf j}^{\text{T}} (D^2-AD-DA+A^2) {\bf j} \\
&= \mathbf{d}^{\text{T}}\mathbf{d}-(\mathbf{d}^{\pm})^{\text{T}}\mathbf{d}-\mathbf{d}^{\text{T}}\mathbf{d}^{\pm}+(\mathbf{d}^{\pm})^{\text{T}}\mathbf{d}^{\pm} \\
&=\sum_{j=1}^n d_j^2-2\sum_{j=1}^n (d_j^{\pm})d_j + \sum_{j=1}^n (d_j^{\pm})^2  \\
&=4 \sum_{j=1}^n (d_j^-)^2.
\end{align*}

Finally, we compute $N_3$.

\begin{align*}
N_3 &= {\bf j}^{\text{T}} L(\Sigma)^3 {\bf j}= {\bf j}^{\text{T}}(D^3-AD^2-DAD+A^2D-D^2A+ADA+DA^2-A^3){\bf j}  \\
&= {\bf j}^{\text{T}}\mathbf{d}^{(3)} - (\mathbf{d}^{\pm})^{\text{T}} \mathbf{d}^{(2)} - \mathbf{d}^{\text{T}}A\mathbf{d} + (\mathbf{d}^{\pm})^{\text{T}} A \mathbf{d} -(\mathbf{d}^{(2)})^{\text{T}} \mathbf{d}^{\pm}+ (\mathbf{d}^{\pm})^{\text{T}} D\mathbf{d}^{\pm} + \mathbf{d}^{\text{T}}A\mathbf{d}^{\pm} -(\mathbf{d}^{\pm})^{\text{T}} A \mathbf{d}^{\pm}\\
&= \sum_{j=1}^n d_j^3-2\sum_{j=1}^n d_j^{\pm}d_{j}^2 - 2\sum_{e_{ij}\in E(\Sigma)} \sigma(e_{ij})d_i d_j + \sum_{j=1}^n (d_j^{\pm})^2d_{j} \\
& \qquad  \qquad+ 2\sum_{e_{ij}\in E(\Sigma)} \sigma(e_{ij})((d_j^{\pm})d_i+(d_i^{\pm})d_j)- 2\sum_{e_{ij}\in E(\Sigma)} \sigma(e_{ij})(d_i^{\pm})(d_j^{\pm})\\
&= 4\sum_{j=1}^n d_j(d_j^-)^2-8\sum_{e_{ij}\in E(\Sigma)} \sigma(e_{ij})d_i^-d_j^-.\qedhere
\end{align*}
\end{proof}
Notice that inequality \eqref{NeqLB3} may be written as
\[ \frac{1}{n^{1/3}}\cdot \Big(4\sum_{j=1}^n d_j(d_j^-)^2+8\sum_{e_{ij}\in E(\Sigma^-)} d_i^-d_j^- -8\sum_{e_{ij}\in E(\Sigma^+)} d_i^-d_j^-\Big)^{1/3} \leq \lambda_{\max}(L(\Sigma)).\]
It is now clear that the lower bounds \eqref{NeqLB1}, \eqref{NeqLB2} and \eqref{NeqLB3} are actually lower bounds on $\lambda_{\max}(L(\Sigma^-))$.  This fact, and the relationship from inequality \eqref{ILACEB}, suggests that $\Sigma^-$ is an induced signed subgraph that is worth studying.

The following upper bound on the Laplacian spectral radius of a signed graph has a dependence on the number of balanced components.
\begin{thm}\label{UBSGLap} Let $\Sigma$ be a signed graph with at least one edge.
Then
\begin{equation}\label{NewUBSG}
\lambda_{\max}(L(\Sigma))\leq \frac{s_1}{n-b(\Sigma)}+\sqrt{(s_1+s_2)-\frac{s_1+s_2+s_1^2}{n-b(\Sigma)}+\left( \frac{s_1}{n-b(\Sigma)} \right)^2}\;.
\end{equation}

\end{thm}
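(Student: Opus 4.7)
The approach is to apply Lemma \ref{lem2} to $A=L(\Sigma)$ with $r:=n-b(\Sigma)=\rank(L(\Sigma))$, the latter equality coming from Lemma \ref{lemRANK}. Because $L(\Sigma)$ is real symmetric and positive semidefinite, several features of Huang and Wang's inequality collapse at once: $\rho(L(\Sigma))=\lambda_{\max}(L(\Sigma))$, $|\tr(L(\Sigma))|=\tr(L(\Sigma))$, and the commutator $L^*L-LL^*$ vanishes, eliminating the $\|L^*L-LL^*\|^2$ term from the definition of $c(A)$.

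The first concrete step is to compute $\tr(L(\Sigma))$ and $\tr(L(\Sigma)^2)=\|L(\Sigma)\|^2$. Writing $L=D-A$ and expanding $L^2=D^2-DA-AD+A^2$, the cross terms $\tr(DA)=\tr(AD)$ vanish since $A$ has zero diagonal, while $\tr(A^2)=\sum_{i,j}a_{ij}^2=2m=s_1$ because the edge signs square to $1$. This yields $\tr(L(\Sigma))=s_1$ and $\tr(L(\Sigma)^2)=s_1+s_2$. Combined with $\|L\|^2\geq s_1^2/n$ (Cauchy--Schwarz on the eigenvalues of $L$), one can then resolve the outer square root in the definition of $c(A)$ without a sign change, giving $c(L(\Sigma))=\|L(\Sigma)\|^2=s_1+s_2$.

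The main obstacle is the inner absolute value $|\tr(L^2)-\tr(L)^2/r|$ in Lemma \ref{lem2}: to recover the stated form I need to show $s_1+s_2-s_1^2/r\geq 0$. My plan for this is Cauchy--Schwarz applied to the $r$ positive eigenvalues $\lambda_1,\dots,\lambda_r$ of $L(\Sigma)$ (the zero eigenvalues contribute to neither sum), which gives $s_1^2=\bigl(\sum_{i=1}^r\lambda_i\bigr)^2\leq r\sum_{i=1}^r\lambda_i^2=r(s_1+s_2)$. With the absolute value eliminated, the expression under the outer radical in Lemma \ref{lem2} becomes $2(s_1+s_2-s_1^2/r)$, and the prefactor $\sqrt{(r-1)/(2r)}$ combines with this to produce $\sqrt{\tfrac{r-1}{r}(s_1+s_2-s_1^2/r)}$.

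The proof then finishes by a mechanical algebraic rearrangement: expanding
\[\frac{r-1}{r}\Bigl(s_1+s_2-\frac{s_1^2}{r}\Bigr)=(s_1+s_2)-\frac{s_1+s_2+s_1^2}{r}+\frac{s_1^2}{r^2},\]
and substituting into Lemma \ref{lem2} gives the advertised bound. The only non-routine ingredient is the eigenvalue Cauchy--Schwarz step used to drop the absolute value; every other manipulation is a direct calculation or an invocation of Lemma \ref{lemRANK} and Lemma \ref{lem2}.
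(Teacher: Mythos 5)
Your proposal is correct and follows essentially the same route as the paper: apply Lemma \ref{lem2} to $L(\Sigma)$ with $r=n-b(\Sigma)$ from Lemma \ref{lemRANK}, compute $\tr(L(\Sigma))=s_1$ and $\tr(L(\Sigma)^2)=s_1+s_2$, identify $c(L(\Sigma))=\|L(\Sigma)\|^2$, and simplify. The only difference is that you explicitly justify dropping the inner absolute value via Cauchy--Schwarz over the $r$ nonzero eigenvalues (giving $s_1^2\leq r(s_1+s_2)$), a step the paper leaves implicit since it only records the weaker $n$-version of that inequality; this is a welcome clarification, not a deviation.
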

\begin{proof}

It is obvious that 
\begin{equation}\label{trL}
\tr(L(\Sigma))=\sum_{j=1}^n d_j =2m = s_1.
\end{equation}
Again, we write $D$ for $D(\Sigma)$ and $A$ for $A(\Sigma)$.  Since $A$ has a 0 for each diagonal entry $\tr(AD)=0=\tr(DA)$. Therefore, 
\begin{equation}\label{trL2}
\tr(L(\Sigma)^2)=\tr(D^2)+\tr(A^2)=\sum_{j=1}^n d_j^2 + \sum_{j=1}^n d_j =s_2+s_1.
\end{equation}  
Since $L(\Sigma)$ is a positive semidefinite matrix, the Cauchy-Schwarz inequality gives $\tr(L(\Sigma))^2 \leq n\cdot \tr(L(\Sigma)^2)$.  Now by definition $c(L(\Sigma))=\|L(\Sigma)\|^2=\tr(L(\Sigma)^2)=\sum_{j=1}^n d_j +2m$.  Let $r=n-b(\Sigma)$.
Now inequality (\ref{NewUBSG}) follows from Lemmas \ref{lem2} and \ref{lemRANK}.
\end{proof}

We define $\mathcal{T}(\Sigma)$ as the set of all signed triangles of $\Sigma$.  If $T\in \mathcal{T}(\Sigma)$ with vertices $v_i$, $v_j$ and $v_k$ we can write $T=e_{ij}e_{jk}e_{ki}$.  We will write $\sigma(T)=\sigma(e_{ij})\sigma(e_{jk})\sigma(e_{ki})$ to mean the sign of the triangle $T$.  Let $t(\Gamma)$ be the number of triangles in $\Gamma$.  Let $t^+(\Sigma)$ be the number of triangles in $\Sigma$ which have positive sign.  Similarly, let $t^-(\Sigma)$ be the number of triangles in $\Sigma$ which have negative sign.  We define $t^{\pm}(\Sigma):=t^+(\Sigma) - t^-(\Sigma)$.  

The following three lower bounds on the Laplacian spectral radius of a signed graph depend on the number of balanced components.  Also, inequalities \eqref{TrI2} and \eqref{TrI3} depend on signed triangles.
\begin{thm}\label{TrLBSGLap} Let $\Sigma=(\Gamma,\sigma)$ be a signed graph. Then
\begin{equation}\label{TrI1}
\lambda_{\max}(L(\Sigma)) \geq \sqrt{\frac{s_1^2-s_2 -s_1}{(n-b(\Sigma))(n-b(\Sigma)-1)}}, 
\end{equation}
if $b(\Sigma)\leq n-2$;
\begin{equation}\label{TrI2}
\lambda_{\max}(L(\Sigma)) \geq \left(\frac{2s_3+6s_2-3s_2 s_1+s_1^3-3s_1^2-12t^{\pm}(\Sigma)}{(n-b(\Sigma))(n-b(\Sigma)-1)(n-b(\Sigma)-2)} \right)^{1/3}, 
\end{equation}
if $b(\Sigma)\leq n-3$;
\begin{equation}\label{TrI3}
\lambda_{\max}(L(\Sigma)) \geq \left(\frac{s_1^2 - 3 s_2 + s_1 s_2 - s_3 + 6t^{\pm}(\Sigma)}{(n-b(\Sigma))(n-b(\Sigma)-1)} \right)^{1/3},
\end{equation}
if $b(\Sigma)\geq n-2$.
\end{thm}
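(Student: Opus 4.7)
The plan is to apply the three inequalities \eqref{l3e1}, \eqref{l3e2} and \eqref{l3e3} to $A=L(\Sigma)$, replacing the rank $r$ by $n-b(\Sigma)$ via Lemma \ref{lemRANK}; the rank hypotheses $r\geq 2$ and $r\geq 3$ in those inequalities are what produce the range conditions on $b(\Sigma)$ in the theorem. Since $L(\Sigma)$ is positive semidefinite, $\rho(L(\Sigma))=\lambda_{\max}(L(\Sigma))$, so the conclusions match once we evaluate $\tr(L(\Sigma)^k)$ for $k=1,2,3$.

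The two lower-order traces $\tr(L(\Sigma))=s_1$ and $\tr(L(\Sigma)^2)=s_1+s_2$ are already computed in the proof of Theorem \ref{UBSGLap}, so the new work is $\tr(L(\Sigma)^3)$. Writing $L=D-A$ and expanding $(D-A)^3$, every summand that mixes two $D$'s with exactly one $A$ has zero trace because $A$ has zero diagonal. Using the cyclic property of the trace, what survives is
\begin{equation*}
\tr(L(\Sigma)^3)=\tr(D^3)+3\tr(DA^2)-\tr(A^3).
\end{equation*}
Clearly $\tr(D^3)=s_3$, and because $a_{ij}^2\in\{0,1\}$ independent of the signature, $(DA^2)_{ii}=d_i\sum_j a_{ij}^2=d_i^2$, giving $\tr(DA^2)=s_2$.

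The main step is a combinatorial evaluation of $\tr(A^3)$. The entry $(A^3)_{ii}=\sum_{j,k}a_{ij}a_{jk}a_{ki}$ is nonzero exactly when $\{v_i,v_j,v_k\}$ forms a triangle $T\in\mathcal{T}(\Sigma)$, and the two orderings of the other two vertices in the product each yield $\sigma(e_{ij})\sigma(e_{jk})\sigma(e_{ki})=\sigma(T)$. Hence each triangle through $v_i$ contributes $2\sigma(T)$, and each triangle is counted at each of its three vertices, producing $\tr(A^3)=6\,t^{\pm}(\Sigma)$ and
\begin{equation*}
\tr(L(\Sigma)^3)=s_3+3s_2-6\,t^{\pm}(\Sigma).
\end{equation*}

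Substituting these three trace values into \eqref{l3e1}, \eqref{l3e2} and \eqref{l3e3} and simplifying yields \eqref{TrI1}, \eqref{TrI2} and \eqref{TrI3}, respectively. The only genuinely nontrivial step in the plan is the triangle-counting identity for $\tr(A^3)$, which is the place where signs enter and the bound acquires its dependence on $t^{\pm}(\Sigma)$; the rest is careful bookkeeping in the expansion of $(D-A)^3$ together with routine algebra.
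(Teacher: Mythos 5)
Your proposal is correct and follows essentially the same route as the paper: apply the Huang--Wang trace inequalities \eqref{l3e1}--\eqref{l3e3} to $L(\Sigma)$ with $r=n-b(\Sigma)$ from Lemma \ref{lemRANK}, reuse $\tr(L(\Sigma))=s_1$ and $\tr(L(\Sigma)^2)=s_1+s_2$ from Theorem \ref{UBSGLap}, and compute $\tr(L(\Sigma)^3)=s_3+3s_2-6t^{\pm}(\Sigma)$ by expanding $(D-A)^3$, killing the mixed terms with zero diagonal, and evaluating $\tr(A^3)=6t^{\pm}(\Sigma)$ by the signed-triangle count. This matches the paper's argument step for step, including the only nontrivial computation.
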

\begin{proof}Let $r=n-b(\Sigma)$.  Notice that $\tr(L(\Sigma)^3)=\tr(D^3)+3\tr(A^2D)-3\tr(AD^2)-\tr(A^3)$ and $\tr(AD^2)=0$.  Hence,
\begin{align}\label{trL3}
\tr(L(\Sigma)^3)&=\sum_{j=1}^n d_j^3 + 3\sum_{j=1}^n d_j^2 -\sum_{i=1}^n \sum_{j=1}^n \sum_{k=1}^n \sigma(e_{ij})\sigma(e_{jk})\sigma(e_{ki}) \nonumber\\
&=\sum_{j=1}^n d_j^3 + 3\sum_{j=1}^n d_j^2 - 6 \sum_{T\in \mathcal{T}(\Sigma)} \sigma(T) \nonumber \\
&=s_3 + 3s_2 - 6 t^{\pm}(\Sigma).
\end{align}
With equations \eqref{trL}, \eqref{trL2}, \eqref{trL3} and Lemma \ref{lemRANK}, inequalities \eqref{TrI1}, \eqref{TrI2} and \eqref{TrI3}  follow from \eqref{l3e1}, \eqref{l3e2} and \eqref{l3e3}, respectively. 
\end{proof}

\section{The Laplacian and signless Laplacian of an unsigned graph}\label{LandSL}

Unsigned graphs are the special case of signed graphs where all edges are signed $+1$.  Here we use the results from the previous section to obtain bounds for the spectral radius of the Laplacian and signless Laplacian of an unsigned graph.  This is done by signing all edges $+1$ for the Laplacian and $-1$ for the signless Laplacian. 

The information we get from Theorem \ref{thm1} for the Laplacian matrix $L(\Gamma)=L(\Gamma,+1)$ of an unsigned graph is trivial.  The values of $N_1$, $N_2$ and $N_3$ all equal 0 when considering the signed graph $\Sigma=(\Gamma,+1)$, and the lower bound $0\leq \lambda_{\max}(L(\Gamma,+1))$ is obvious since $L(\Gamma,+1)$ is positive semidefinite.

Theorem \ref{thm1} gives the following lower bounds for the signless Laplacian spectral radius of an unsigned graph.

\begin{cor}
Consider a graph $\Gamma=(V,E)$ of order $n$.  Then
\begin{equation}\label{NeqSLB1}
4m/n=2s_1/n=2\overline{d} \leq \lambda_{\max}(L(\Gamma,-1)),
\end{equation}
\begin{equation}\label{NeqSLB2}
\sqrt{4s_2/n} \leq \lambda_{\max}(L(\Gamma,-1)),
\end{equation}
and
\begin{equation}\label{NeqSLB3}
\frac{1}{n^{1/3}}\cdot \Big(4s_3+8\sum_{e_{ij}\in E(\Sigma)} d_i d_j\Big)^{1/3} \leq \lambda_{\max}(L(\Gamma,-1)).
\end{equation}
Furthermore, equality in \eqref{NeqSLB1}, \eqref{NeqSLB2} and \eqref{NeqSLB3} holds if $\Gamma$ is regular. 
\end{cor}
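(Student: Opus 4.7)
The plan is to obtain the three bounds as an immediate specialization of Theorem \ref{thm1} to the all-negative signing $\Sigma = (\Gamma,-1)$, then verify the equality claim for regular graphs by direct substitution. Under the all-negative signing every edge is negative, so $d_j^- = d_j$ and $d_j^+ = 0$ for every vertex. Summing gives $\sum_j d_j^- = s_1 = 2m = n\overline{d}$ and $\sum_j (d_j^-)^2 = s_2$, so plugging into \eqref{NeqLB1} and \eqref{NeqLB2} produces \eqref{NeqSLB1} and \eqref{NeqSLB2} on the nose.

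For \eqref{NeqSLB3}, the same substitution converts $\sum_j d_j (d_j^-)^2$ into $s_3$. The only place where care is needed is the cross-term: since $\sigma(e_{ij}) = -1$ on every edge of $\Sigma$,
\[
-8\sum_{e_{ij}\in E(\Sigma)} \sigma(e_{ij})\, d_i^- d_j^- \;=\; +8\sum_{e_{ij}\in E(\Gamma)} d_i d_j,
\]
and \eqref{NeqLB3} becomes exactly \eqref{NeqSLB3}. This sign-flip is really the only piece of nontrivial bookkeeping in the whole argument.

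For the equality statement, suppose $\Gamma$ is $k$-regular with at least one edge. Then $L(\Gamma,-1) = D(\Gamma) + A(\Gamma) = kI + A(\Gamma)$, so (component by component) $\lambda_{\max}(L(\Gamma,-1)) = k + \lambda_{\max}(A(\Gamma)) = 2k$. Substituting $d_j = k$ and $m = nk/2$ into each right-hand side also gives $2k$: indeed, $4m/n = 2k$, $\sqrt{4s_2/n} = \sqrt{4k^2} = 2k$, and
\[
\frac{1}{n^{1/3}}\Big(4 n k^3 + 8 \cdot \tfrac{nk}{2}\cdot k^2\Big)^{1/3} = \frac{(8nk^3)^{1/3}}{n^{1/3}} = 2k.
\]
Thus all three inequalities are tight in the regular case. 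There is no serious obstacle here — the corollary is essentially a direct translation of Theorem \ref{thm1} via the canonical all-negative signing, and the equality check amounts to the routine computation just sketched.
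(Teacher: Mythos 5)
Your proposal is correct and follows essentially the same route as the paper: specialize Theorem \ref{thm1} to the all-negative signing (where $d_j^-=d_j$ and $\sigma(e_{ij})=-1$, flipping the sign of the cross-term), and check equality for $k$-regular graphs, where the paper simply cites the known fact $\lambda_{\max}(L(\Gamma,-1))=2\Delta$ while you verify it directly via $L(\Gamma,-1)=kI+A(\Gamma)$.
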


\begin{proof}
All three inequalities are immediate from Theorem \ref{thm1}.  If $\Gamma$ is regular the left side of inequalities \eqref{NeqSLB1}, \eqref{NeqSLB2} and \eqref{NeqSLB3} all simplify to $2\Delta$.  It is well known (see for example, \cite{MR2312332}) that if $\Gamma$ is regular then $\lambda_{\max}(L(\Gamma,-1))=4m/n=2\Delta$.  
\end{proof}
Inequality \eqref{NeqSLB1} is mentioned by Cvetkovi{\'c}, Rowlinson and Simi{\'c} in \cite{MR2401311}.  However, \eqref{NeqSLB2} and  \eqref{NeqSLB3} appear to be new.

Notice that the rank of the Laplacian matrix appears in Theorems \ref{UBSGLap} and \ref{TrLBSGLap}.  We remind the reader of some basic facts about the Laplacian and signless Laplacian rank for unsigned graphs.  If $\Gamma$ is a graph we will write $c(\Gamma)$ to denote the number of components of $\Gamma$ and $c_{\bip}(\Gamma)$ to denote the number of bipartite components of $\Gamma$.  The following lemma is a special case of Lemma \ref{lemRANK}.

\begin{lem}[\cite{MR1130611, MR2312332}]\label{lemRANKsl} Let $\Gamma$ be a graph.  Then
\begin{align*}
 \rank(L(\Gamma,+1))&=n-c(\Gamma),\\
 \rank(L(\Gamma,-1))&=n-c_{\bip}(\Gamma).
\end{align*}
\end{lem}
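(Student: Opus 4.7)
The plan is to obtain both identities as direct corollaries of Lemma \ref{lemRANK}, which states $\rank(L(\Sigma)) = n - b(\Sigma)$ for any signed graph $\Sigma$. Once this is in hand, the whole task reduces to identifying the number of balanced components $b(\Gamma,+1)$ and $b(\Gamma,-1)$ in terms of the unsigned structure of $\Gamma$.

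First I would handle the all-positive case. Since every edge carries sign $+1$, the sign of any cycle $C$ is $\sigma(C) = \prod_{e \in C} \sigma(e) = +1$, so every cycle is positive and therefore every connected component of $(\Gamma,+1)$ is balanced. This gives $b(\Gamma,+1) = c(\Gamma)$, and Lemma \ref{lemRANK} yields $\rank(L(\Gamma,+1)) = n - c(\Gamma)$.

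Next I would turn to the all-negative case. A cycle $C$ of length $\ell$ in $(\Gamma,-1)$ has sign $\sigma(C) = (-1)^\ell$, so $C$ is positive if and only if $\ell$ is even. A component is therefore balanced precisely when all of its cycles have even length, which by the classical characterization of bipartite graphs is equivalent to the component being bipartite. Hence $b(\Gamma,-1) = c_{\bip}(\Gamma)$, and Lemma \ref{lemRANK} again gives $\rank(L(\Gamma,-1)) = n - c_{\bip}(\Gamma)$.

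There is no real obstacle here; the argument is essentially a bookkeeping exercise, the only ingredient beyond Lemma \ref{lemRANK} being the standard equivalence between bipartiteness and the absence of odd cycles, which may simply be cited.
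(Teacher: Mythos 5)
Your proposal is correct and follows exactly the route the paper intends: the paper presents this lemma as a special case of Lemma \ref{lemRANK}, and your identifications $b(\Gamma,+1)=c(\Gamma)$ and $b(\Gamma,-1)=c_{\bip}(\Gamma)$ (via the parity of cycle signs and the odd-cycle characterization of bipartiteness) are precisely the bookkeeping that specialization requires.
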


Applying Theorems \ref{UBSGLap} and \ref{TrLBSGLap} to the Laplacian and signless Laplacian of an unsigned graph results in the following upper bounds.

\begin{cor}
Let $\Gamma=(V,E)$ be a graph with at least one edge.  Then
\begin{align}
\lambda_{\max}(L(\Gamma,+1))&\leq\frac{s_1}{n-c(\Gamma)}+\sqrt{(s_1+s_2)-\frac{s_1+s_2+s_1^2}{n-c(\Gamma)}+\left( \frac{s_1}{n-c(\Gamma)} \right)^2}, \label{NewUBL}\\
\lambda_{\max}(L(\Gamma,-1))&\leq \frac{s_1}{n-c_{\bip}(\Gamma)}+\sqrt{(s_1+s_2)-\frac{s_1+s_2+s_1^2}{n-c_{\bip}(\Gamma)}+\left( \frac{s_1}{n-c_{\bip}(\Gamma)} \right)^2}.\label{NewUBSL}
\end{align}
\end{cor}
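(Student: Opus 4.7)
The strategy is to apply Theorem \ref{UBSGLap} directly to the two signed graphs $(\Gamma,+1)$ and $(\Gamma,-1)$. Since the right-hand side of \eqref{NewUBSG} is a function of $n$, $s_1$, $s_2$, and $b(\Sigma)$ alone, and since $s_1$ and $s_2$ depend only on the underlying graph $\Gamma$, the entire derivation reduces to identifying the number of balanced components $b(\Gamma,\pm 1)$ in each case.

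For the all-positive signing, every cycle is a product of $+1$'s and hence has positive sign, so every connected component of $(\Gamma,+1)$ is balanced; thus $b(\Gamma,+1) = c(\Gamma)$. Substituting this into \eqref{NewUBSG} yields \eqref{NewUBL}. For the all-negative signing, a cycle of length $\ell$ in $(\Gamma,-1)$ has sign $(-1)^{\ell}$, so a component is balanced if and only if every cycle in it has even length, i.e., if and only if that component is bipartite; hence $b(\Gamma,-1) = c_{\bip}(\Gamma)$, and substitution into \eqref{NewUBSG} produces \eqref{NewUBSL}. As a cross-check, one could instead invoke Lemma \ref{lemRANKsl} together with Lemma \ref{lemRANK}, which immediately give $n - b(\Gamma,+1) = n - c(\Gamma)$ and $n - b(\Gamma,-1) = n - c_{\bip}(\Gamma)$, recovering the same denominators.

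Since the corollary is an immediate specialization of Theorem \ref{UBSGLap}, there is no substantive obstacle: the only content is the standard combinatorial identification of balanced components with connected (respectively, bipartite) components for the two extremal signings, after which \eqref{NewUBL} and \eqref{NewUBSL} follow by direct substitution.
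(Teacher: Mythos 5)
Your proof is correct and matches the paper's route: the paper likewise specializes Theorem \ref{UBSGLap} to the signings $(\Gamma,+1)$ and $(\Gamma,-1)$, using the identification $b(\Gamma,+1)=c(\Gamma)$ and $b(\Gamma,-1)=c_{\bip}(\Gamma)$ (recorded there via Lemma \ref{lemRANKsl} as the special case of Lemma \ref{lemRANK}). Nothing is missing.
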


The specialization of Theorem \ref{TrLBSGLap} produces the following.
{\allowdisplaybreaks 
\begin{cor} Let $\Gamma=(V,E)$ be a graph. Then
\begin{align}
\lambda_{\max}(L(\Gamma,+1)) &\geq \sqrt{\frac{s_1^2-s_2 -s_1}{(n-c(\Gamma))(n-c(\Gamma)-1)}},\label{TrIL1}
\intertext{if $c(\Gamma)\leq n-2;$}
\lambda_{\max}(L(\Gamma,+1)) &\geq \left(\frac{2s_3+6s_2-3s_2 s_1+s_1^3-3s_1^2-12t(\Gamma)}{(n-c(\Gamma))(n-c(\Gamma)-1)(n-c(\Gamma))-2)} \right)^{1/3}, \label{TrIL2}
\intertext{if $c(\Gamma)\leq n-3;$}
\lambda_{\max}(L(\Gamma,+1)) &\geq \left(\frac{s_1^2 - 3 s_2 + s_1 s_2 - s_3 + 6t(\Gamma)}{(n-c(\Gamma))(n-c(\Gamma)-1)} \right)^{1/3}, \label{TrIL3}
\intertext{if $c(\Gamma)\leq n-2;$}
\lambda_{\max}(L(\Gamma,-1)) &\geq \sqrt{\frac{s_1^2-s_2 -s_1}{(n-c_{\bip}(\Gamma))(n-c_{\bip}(\Gamma)-1)}}, \label{TrISL1}
\intertext{if $c_{\bip}(\Gamma)\leq n-2;$}
\lambda_{\max}(L(\Gamma,-1)) &\geq \left(\frac{2s_3+6s_2-3s_2 s_1+s_1^3-3s_1^2+12t(\Gamma)}{(n-c_{\bip}(\Gamma))(n-c_{\bip}(\Gamma)-1)(n-c_{\bip}(\Gamma))-2)} \right)^{1/3}, \label{TrISL2}
\intertext{if $c_{\bip}(\Gamma)\leq n-3;$}
\lambda_{\max}(L(\Gamma,-1)) &\geq \left(\frac{s_1^2 - 3 s_2 + s_1 s_2 - s_3 - 6t(\Gamma)}{(n-c_{\bip}(\Gamma))(n-c_{\bip}(\Gamma)-1)} \right)^{1/3}\label{TrISL3},\intertext{if $c_{\bip}(\Gamma))\leq n-2.$} \notag
\end{align}
\end{cor}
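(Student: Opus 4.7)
The plan is to specialize Theorem \ref{TrLBSGLap} to the two signed graphs $\Sigma = (\Gamma,+1)$ and $\Sigma = (\Gamma,-1)$. Since $s_1$, $s_2$, $s_3$ depend only on the degree sequence of $\Gamma$, they are unchanged in passing from the signed to the unsigned setting. Only two quantities in the bounds of Theorem \ref{TrLBSGLap} need to be translated: the balance count $b(\Sigma)$ that appears in the rank expression $n-b(\Sigma)$, and the signed triangle count $t^{\pm}(\Sigma)$.

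For the balance count, Lemma \ref{lemRANKsl} gives both cases directly. One can also see the translation from first principles: in $(\Gamma,+1)$ every cycle has sign $+1$, so every connected component is balanced and $b(\Gamma,+1) = c(\Gamma)$; in $(\Gamma,-1)$ a cycle of length $\ell$ has sign $(-1)^{\ell}$, so a component is balanced precisely when all of its cycles have even length, i.e., when it is bipartite, giving $b(\Gamma,-1) = c_{\bip}(\Gamma)$. For the triangle count, every triangle in $(\Gamma,+1)$ has sign $(+1)^3 = +1$ while every triangle in $(\Gamma,-1)$ has sign $(-1)^3 = -1$, whence $t^{\pm}(\Gamma,+1) = t(\Gamma)$ and $t^{\pm}(\Gamma,-1) = -t(\Gamma)$.

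Substituting these identifications into \eqref{TrI1}, \eqref{TrI2}, \eqref{TrI3} immediately yields \eqref{TrIL1}--\eqref{TrIL3} in the positive case and \eqref{TrISL1}--\eqref{TrISL3} in the negative case. The only structural difference between the two triples is the sign flip of the triangle coefficients $-12t^{\pm}$ and $+6t^{\pm}$, which explains why the $t(\Gamma)$ terms appear with opposite signs in the Laplacian bounds versus the signless Laplacian bounds. There is no real obstacle here: the proof is a direct substitution, and the only points requiring care are keeping track of the sign of $t^{\pm}$ in the negative case and matching the hypotheses $b(\Sigma)\le n-k$ from Theorem \ref{TrLBSGLap} with their unsigned analogues $c(\Gamma)\le n-k$ and $c_{\bip}(\Gamma)\le n-k$.
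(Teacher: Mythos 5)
Your proposal is correct and is exactly the route the paper intends: the corollary is stated as a direct specialization of Theorem \ref{TrLBSGLap}, using Lemma \ref{lemRANKsl} to replace $b(\Sigma)$ by $c(\Gamma)$ and $c_{\bip}(\Gamma)$, together with $t^{\pm}(\Gamma,+1)=t(\Gamma)$ and $t^{\pm}(\Gamma,-1)=-t(\Gamma)$, which accounts for the sign flips in \eqref{TrISL2} and \eqref{TrISL3}. Your bookkeeping of the rank conditions and triangle signs matches the paper's statement.
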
}
\section{Examples}\label{eg}

Here we present some examples of signed graphs and the various bounds obtained in this paper.  For the signed graphs see Figure \ref{ex1}.

\begin{figure}[h!]
    \includegraphics[width=0.65\textwidth]{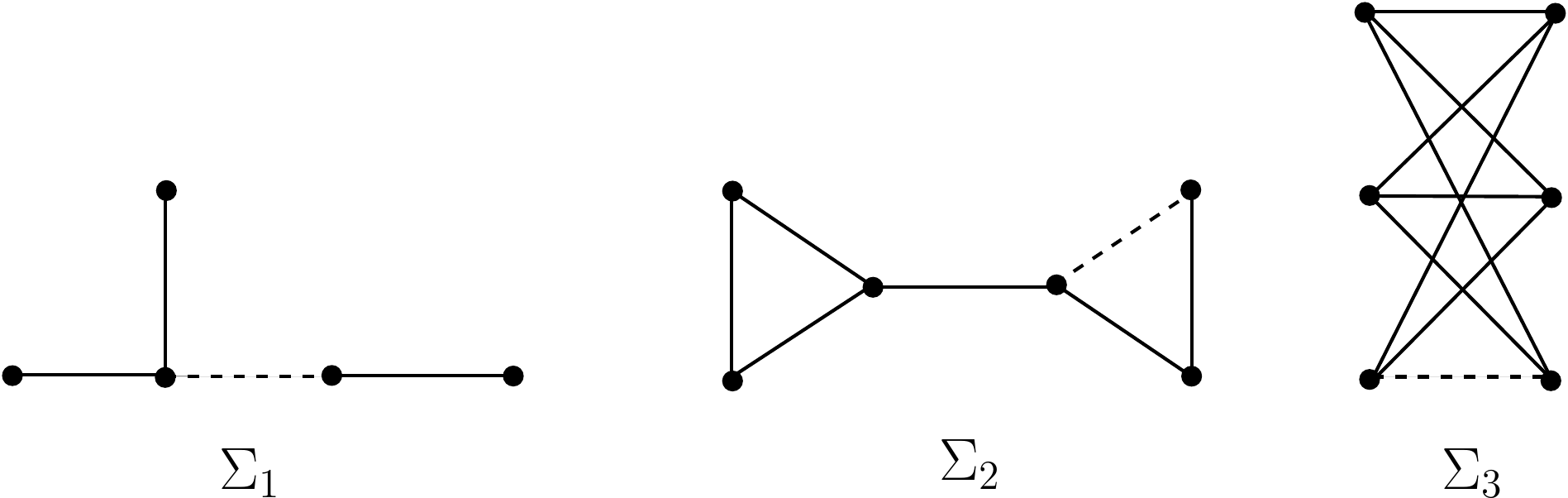}
    \caption{Three signed graphs $\Sigma_1$, $\Sigma_2$ and $\Sigma_3$.  Dashed lines represent negative ($-1$) edges, and solid lines represent positive ($+1$) edges.}\label{ex1}
\end{figure}

Table 1 is separated into three smaller tables.  The top table gives the various bounds for the Laplacian spectral radius of the signed graphs in Figure \ref{ex1}.  The table in the middle gives the various bounds for the Laplacian spectral radius of the unsigned graphs $\Gamma_i$  (the underlying graphs in Figure \ref{ex1}).  The bottom table gives the various bounds for the signless Laplacian spectral radius of the unsigned graphs $\Gamma_i$.  The columns are aligned so related bounds can be compared.  Specifically, each of the bounds in a distinct column are a consequence of the bounds listed in the top table which are \eqref{NeqLB1}, \eqref{NeqLB2}, \eqref{NeqLB3}, \eqref{UB1}, \eqref{UB2}, \eqref{NewUBSG}, \eqref{TrI1}, \eqref{TrI2} and \eqref{TrI3}.

Readers more familiar with signed graph theory might wonder why $\Sigma_1$ has a negative edge even though it is switching equivalent to the all positive tree.  This was done in order to compare the bounds which depend on the edge signs.
\begin{center}
\begin{table}[h!]
\caption{Values of the various bounds for the signed graphs $\Sigma_1$, $\Sigma_2$ and $\Sigma_3$.}
    \begin{tabular}{ c| c| c| c | c | c | c | c | c | c | c |}
    \cline{2-11}
      & $\lambda_{\max}(L(\Sigma_i))$ &\eqref{NeqLB1} &\eqref{NeqLB2}&\eqref{NeqLB3}& \eqref{UB1} &\eqref{UB2} & \eqref{NewUBSG} & \eqref{TrI1} &  \eqref{TrI2}  &  \eqref{TrI3} \\
    \hline
    \multicolumn{1}{|c|}{$\Sigma_1$} & 4.170 & 0.8 & 1.265 & 1.776 & 4.449 & 4.645  &  4.449 &  1.826 &  1.651  &  2.067\\
    \hline
     \multicolumn{1}{|c|}{$\Sigma_2$} & 4.842 & 0.667 &1.155  &1.671& 5.266 & 5.464 &  5.908 &  2.221 &  2.095  &  2.527\\
    \hline
    \multicolumn{1}{|c|}{$\Sigma_3$} & 5.561 & 0.667 & 1.155 & 1.747 &6 & 6 &  6.873 &  2.898 &  2.785  &  3.188\\
    \hline
    \multicolumn{7}{c}{} \\ \cline{2-11}    
     & $\lambda_{\max}(L(\Gamma_i,+1))$ &\eqref{NeqLB1}&\eqref{NeqLB2}&\eqref{NeqLB3}& \eqref{UB1} &\eqref{UB2} & \eqref{NewUBL} & \eqref{TrIL1} &  \eqref{TrIL2}  &  \eqref{TrIL3} \\
    \hline
    \multicolumn{1}{|c|}{$\Sigma_1$} & 4.170 &0&0&0&4.449 & 4.645 &  4.449 &  1.826 &  1.651  &  2.067\\
    \hline
    \multicolumn{1}{|c|}{$\Sigma_2$} & 4.562 &0&0&0& 5.266& 5.464  &  5.453 &  2.720 &  2.621  &  2.916\\
    \hline
     \multicolumn{1}{|c|}{$\Sigma_3$} & 6 &0&0&0& 6 & 6& 6  & 3.550 &  3.509  &  3.649\\
    \hline
    \multicolumn{7}{c}{} \\ \cline{2-11}   
    & $\lambda_{\max}(L(\Gamma_i,-1))$ &\eqref{NeqSLB1}&\eqref{NeqSLB2}&\eqref{NeqSLB3}&\eqref{UB1} &\eqref{UB2} & \eqref{NewUBSL} & \eqref{TrISL1} &  \eqref{TrISL2}  &  \eqref{TrISL3} \\
    \hline
    \multicolumn{1}{|c|}{$\Sigma_1$} & 4.170 & 3.2 & 3.578& 3.752&4.449 & 4.645 &  4.449 &  1.826 &  1.651  &  2.067\\
    \hline
    \multicolumn{1}{|c|}{$\Sigma_2$}  & 5 &4.667&4.761&4.820& 5.266& 5.464 &  5.908 &  2.221 &  2.110  &  2.506\\
    \hline
    \multicolumn{1}{|c|}{$\Sigma_3$}  & 6 & 6 & 6 & 6 & 6& 6 &  6 &  3.550 &  3.509  &  3.649\\
    \hline
    \end{tabular}
    \end{table}
\end{center}

Li and Li \cite{MR2477235} list the known bounds for the Laplacian spectral radius of a signed graph.  The following is a list of the stronger upper bounds mentioned in their paper:

\begin{align}
\lambda_{\max}(L(\Sigma)) &\leq \max_{e_{ij}\in E(\Sigma)} \frac{d_i(d_i+m_i)+d_j(d_j+m_j)}{d_i+d_j}, \label{kb1}\\
\lambda_{\max}(L(\Sigma)) &\leq \max_{e_{ij}\in E(\Sigma)} \Big\{ 2+\sqrt{d_i(d_i+m_i-4)+d_j(d_j+m_j-4)+4} \Big\},\label{kb2}\\
\lambda_{\max}(L(\Sigma)) &\leq \max_{v_i\in V(\Sigma)} \big\{ d_i+\sqrt{d_im_i} \big\}, \label{kb3}\\
\lambda_{\max}(L(\Sigma)) &\leq \max_{e_{ij}\in E(\Sigma)} \frac{d_i+d_j+\sqrt{(d_i-d_j)^2+4\sqrt{d_id_jm_im_j}}}{2}. \label{kb4}
\end{align}

Hou, Li and Pan \cite{MR1950410} generalized the following lower bound to signed graphs which was previously known for unsigned graphs: 
\begin{equation}\label{kb5}
\lambda_{\max}(L(\Sigma)) \geq  \Delta +1.
\end{equation}

Table 2 compares these previously known bounds to the signed graphs in Figure \ref{ex1}.  Notice that none of these bounds depend on edge signs unlike the new bounds from Section \ref{MainR}.  Therefore, we do not need to consider the Laplacian and signless Laplacian of the unsigned graphs $\Gamma_i$ separately.

\begin{center}
\begin{table}[h!]
\caption{Values of previously known bounds for the signed graphs $\Sigma_1$, $\Sigma_2$ and $\Sigma_3$.}
    \begin{tabular}{ c| c| c| c | c | c |}
    \cline{2-6}
      & \eqref{kb1} &\eqref{kb2}&\eqref{kb3}& \eqref{kb4} &\eqref{kb5}  \\
    \hline
    \multicolumn{1}{|c|}{$\Sigma_1$} & 4.250 & 4.236 & 5 & 4.562 & 4\\
    \hline
     \multicolumn{1}{|c|}{$\Sigma_2$} & 5.333 & 5.464 & 5.646 & 5.646 & 4 \\
    \hline
    \multicolumn{1}{|c|}{$\Sigma_3$}  & 6 & 6 & 6 & 6 & 4  \\
    \hline
    \end{tabular}
    \end{table}
\end{center}

\end{document}